\newcounter{notes}
\newcommand{\ignore}[1]{}
\newtheorem{theorem}{Theorem}
\newtheorem{proposition}[theorem]{Proposition}
\newtheorem{lemma}[theorem]{Lemma}
\newtheorem{observation}[theorem]{Observation}
\theoremstyle{definition}
\newtheorem{definition}[theorem]{Definition}
\newtheoremstyle{theoremwithref}{}{}{\itshape}{}{\bfseries}{.}{.5em}{#1 #2 #3}
\theoremstyle{theoremwithref}
\newcommand{\R}{\mathbb{R}}
\newcommand{\Z}{\mathbb{Z}}
\newcommand{\SU}{\mathrm{SU}}
\newcommand{\App}{\mathrm{App}}
\newcommand{\Mod}{\mathrm{Mod}}
\newcommand{\tr}{\mathrm{tr}}
\newcommand{\Ind}{\mathrm{Ind}}
\title[Transitivity of normal subgroups of $\Mod(\Sigma_g)$ on
character varieties]{Transitivity of normal subgroups of the mapping class groups
on character varieties}
\author{Julien March\'e}
\address{Sorbonne Universit\'es, UPMC Univ.\ Paris 06, Institut de Math\'ematiques
de Jussieu-Paris Rive Gauche, UMR 7586, CNRS, Univ. Paris Diderot, Sorbonne
Paris Cit\'e, 75005 Paris, France}
\email{julien.marche@imj-prg.fr}
\author{Maxime Wolff}
\address{Sorbonne Universit\'es, UPMC Univ.\ Paris 06, Institut de Math\'ematiques
de Jussieu-Paris Rive Gauche, UMR 7586, CNRS, Univ. Paris Diderot, Sorbonne
Paris Cit\'e, 75005 Paris, France}
\email{maxime.wolff@imj-prg.fr}
\begin{document}

\maketitle

\begin{abstract}
  We prove that the action of any non-trivial normal subgroup of the
  mapping class group of a surface of genus $g\geqslant 2$
  is almost minimal
  on the character variety
  $X(\pi_1\Sigma_g,\SU_2)$: the orbit of almost every point is
  dense.
\vspace{0.2cm}

\end{abstract}

\section{Introduction}\label{intro}

For every $g\geqslant 2$, let $\pi_1\Sigma_g$ denote a fundamental
group of a compact, connected, orientable surface of genus $g$,
and $\Mod(\Sigma_g)$ its mapping class group.
In \cite{Goldman97}, Goldman proved that $\Mod(\Sigma_g)$ acts
ergodically on the character variety $X(\pi_1\Sigma_g,\SU_2)$,
and subsequently, Previte and Xia \cite{PreviteXia}
proved that for every
conjugacy class of representation $\rho\colon\pi_1\Sigma_g\to\SU_2$
with dense image, the orbit $\Mod(\Sigma_g)\cdot[\rho]$ is
dense in $X(\pi_1\Sigma_g,\SU_2)$.

Goldman then raised (see \cite{Goldman05}) the question of whether
smaller subgroups of $\Mod(\Sigma_g)$ still act ergodically on
$X(\pi_1\Sigma_g,\SU_2)$, and with Xia he proved~\cite{GoldmanXia12}
that when $\Sigma$ is a twice punctured torus, the Torelli group
acts ergodically in the relative $\SU_2$ character varieties.
This question was addressed by
Funar and March\'e~\cite{FunarMarche}, who proved that the
Johnson subgroup, generated by the Dehn twists along separating curves,
acts ergodically on this character variety. Provided $g\geqslant 3$,
Bouilly (see~\cite{Bouilly})
gave a simpler proof that the
Torelli group acts ergodically on this character variety, and
in fact on the topological components of the character variety
$X(\pi_1\Sigma_g, G)$ for any compact Lie group $G$.

In this note, when a group $\Gamma$ acts on a topological space
$X$ endowed with a Radon measure $\mu$, we will say that the action
is {\em almost minimal} if the orbit of almost every point is dense.
We say the action is minimal if every orbit is dense, and ergodic
if for every measurable $\Gamma$-invariant set $U$, either $U$ or
its complement has measure~$0$. These two latter properties are
independent in general, while both imply almost minimality.

The main result of this note is the following.
\begin{theorem}\label{thm:PresqueMin}
  Suppose $g\geqslant 2$.
  Let $\Gamma$ be a non central, normal subgroup of $\Mod(\Sigma_g)$.
  Then the action of $\Gamma$ on $X(\pi_1\Sigma_g,\SU_2)$ is
  almost minimal.
\end{theorem}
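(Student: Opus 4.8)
The plan is to deduce almost minimality of $\Gamma$ from the known density results of Previte--Xia (or Bouilly) for the full mapping class group $\Mod(\Sigma_g)$, exploiting the normality of $\Gamma$ and some ergodic-theoretic input from Goldman's theorem. The rough idea: let $D \subseteq X(\pi_1\Sigma_g,\SU_2)$ be the set of (classes of) representations with dense image in $\SU_2$. By Previte--Xia, every point of $D$ has dense $\Mod(\Sigma_g)$-orbit, and $D$ is $\Mod(\Sigma_g)$-invariant of full measure (its complement consists of reducible representations and representations into the normalizer of a maximal torus, a positive-codimension subvariety). So it suffices to show that almost every point of $D$ has dense $\Gamma$-orbit.

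\medskip

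First I would observe that the function $x \mapsto \overline{\Gamma \cdot x}$ is, in a suitable sense, $\Mod(\Sigma_g)$-equivariant: for $\phi \in \Mod(\Sigma_g)$ one has $\overline{\Gamma \cdot (\phi x)} = \phi \cdot \overline{\phi^{-1}\Gamma\phi \cdot x} = \phi \cdot \overline{\Gamma \cdot x}$, using $\phi^{-1}\Gamma\phi = \Gamma$. Consequently the set
\[
  Z = \{\, x \in D : \overline{\Gamma\cdot x} = X(\pi_1\Sigma_g,\SU_2)\,\}
\]
is $\Mod(\Sigma_g)$-invariant. Next I would argue that $Z$ is measurable (the closure of an orbit varies measurably; alternatively work with a countable basis of open sets and note that $\{x : \Gamma\cdot x \cap V \neq \emptyset\} = \bigcup_{\gamma\in\Gamma}\gamma^{-1}V$ is Borel, and $Z = D \cap \bigcap_V \{x : \Gamma\cdot x \cap V\neq\emptyset\}$). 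Then, by Goldman's ergodicity of the $\Mod(\Sigma_g)$-action, $Z$ has measure $0$ or full measure, and the theorem reduces to showing $Z$ is \emph{nonempty} — indeed, that it has positive measure, but a single point suffices once we know $Z$ is invariant and we are willing to use ergodicity. Actually, a cleaner route avoids ergodicity of the ambient action: it is enough to show that the union of $\Mod(\Sigma_g)$-translates of any fixed dense $\Gamma$-orbit closure is everything, which is automatic, so the real content is producing one point $x_0 \in D$ with $\overline{\Gamma\cdot x_0} \supseteq D$, equivalently dense.

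\medskip

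To produce such a point, the key will be to understand $\overline{\Gamma \cdot x}$ for $x \in D$ as a closed set that is invariant under the subgroup of $\Mod(\Sigma_g)$ that stabilizes it, and to use that $\Gamma$ is ``large'' inside $\Mod(\Sigma_g)$. Here is where I expect the paper to use structural facts about normal subgroups of the mapping class group: any non-central normal subgroup $\Gamma$ contains a nontrivial element, and (since $\Mod(\Sigma_g)$ has no nontrivial finite normal subgroups for $g\geq 2$ and, more strongly, normal subgroups either are central or contain specific mapping classes) one should be able to locate in $\Gamma$ a power of a Dehn twist, or at least conjugates generating enough of the group. Then one mimics the Previte--Xia / Bouilly strategy: find explicit curves along which twists act on the character variety by ``bending'' or ``twist'' flows whose trajectories, starting from a dense-image representation, fill out the character variety; since $\Gamma$ contains suitably many conjugates of a fixed power $T_\alpha^N$, and the corresponding flows $\Phi_\alpha^N$ still generate a dense subgroup of the relevant torus actions on $X$, the orbit closure of a generic $x\in D$ is all of $X$.

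\medskip

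The hard part will be this last step: upgrading the density of the $\Mod(\Sigma_g)$-orbit to density of the $\Gamma$-orbit. For the full mapping class group one has twists along \emph{all} simple closed curves at one's disposal; for $\Gamma$ one only has, say, $N$-th powers of twists, and only along curves in a single mapping-class-group orbit (e.g.\ nonseparating curves), together with their conjugates. One must check that these still act ``transitively enough'': concretely, that the group generated by the Hamiltonian twist flows $\{t \mapsto \Phi_\alpha^{Nt}\}$, for $\alpha$ ranging over a mapping-class-group orbit of simple closed curves, acts with dense orbits on $X(\pi_1\Sigma_g,\SU_2)$ away from a measure-zero set — and then descend from the flow to the integer-time map $\Phi_\alpha^N$ by a Kronecker/equidistribution argument valid at generic points (where the relevant trace functions are irrational multiples of $\pi$). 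The combinatorial input — that twists along a generating set of curves, raised to a fixed power, still suffice — is the crux, and I would expect it to rest on a careful choice of a pair (or small configuration) of curves whose twist flows generate a group acting transitively on a full-measure subset, plus the observation that passing to $N$-th powers only shrinks each one-parameter subgroup to a dense subgroup of its closure and hence changes nothing generically.
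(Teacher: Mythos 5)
Your reduction is sound in outline (pass to the full-measure, $\Mod(\Sigma_g)$-invariant set of dense-image representations, invoke Previte--Xia, and try to show that generically the $\Gamma$-orbit closure contains the $\Mod(\Sigma_g)$-orbit), and this is indeed the skeleton of the paper's argument. But the step you yourself flag as ``the crux'' is exactly the content of the paper, and your sketch of it would not work. You propose to locate in $\Gamma$ a power of a Dehn twist (or conjugates thereof) and then run a twist-flow equidistribution argument. A non-central normal subgroup of $\Mod(\Sigma_g)$ need \emph{not} contain any power of any Dehn twist: by Dahmani--Guirardel--Osin there are infinite normal subgroups all of whose non-trivial elements are pseudo-Anosov, and the theorem is claimed for these too. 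So the structural input you are hoping for is simply unavailable, and no argument based on ``$N$-th powers of twists lying in $\Gamma$'' can cover the general case.

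What the paper does instead is a commutator trick that uses normality alone. By non-centrality there is $\varphi\in\Gamma$ not commuting with some twist $\tau_\gamma$ along a non-separating curve; setting $\delta=\varphi(\gamma)$, the element $\tau_\gamma^{n}\tau_\delta^{-n}=\tau_\gamma^{n}\varphi\tau_\gamma^{-n}\varphi^{-1}$ lies in $\Gamma$ for every $n$. One then shows that for $[\rho]$ in an explicit full-measure set, there is a sequence $q_n\to\infty$ with $\tau_\gamma^{q_n}\tau_\delta^{-q_n}[\rho]\to\tau_\gamma[\rho]$: writing $\alpha=\theta_\delta(\rho)$, $\beta=\theta_\gamma(\rho)$ and using Goldman's identity $\tau_\gamma=\Phi_\gamma^{\theta_\gamma}$, this follows from simultaneous Diophantine approximation (Khinchin gives $|q_n\alpha|=o(1/q_n)$, Hardy--Littlewood gives $q_n\beta\to\beta$; genericity of these conditions on $X$ needs the non-vanishing of the Poisson bracket $\{\theta_\gamma,\theta_\delta\}$ when the curves intersect). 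Thus every single Dehn twist is approximated, \emph{at the point} $[\rho]$, by elements of $\Gamma$, and a diagonal/induction argument puts the whole $\Mod(\Sigma_g)$-orbit in $\overline{\Gamma\cdot[\rho]}$. Separately, note that your shortcut ``a single point in $Z$ suffices'' is not valid: ergodicity upgrades \emph{positive measure} to full measure, not nonemptiness, and knowing one dense $\Gamma$-orbit says nothing about the others; the paper avoids this by proving the key lemma directly on an explicit full-measure set.
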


When $g\geqslant 3$, the centre of $\Mod(\Sigma_g)$ is trivial,
while if $g=2$ this centre is isomorphic to $\Z/2\Z$, and generated
by the hyperelliptic involution.
The hypothesis ``non central'' simply rules out the cases when
$\Gamma$ is trivial or equal to this central $\Z/2\Z$ subgroup.
Thus Theorem~\ref{thm:PresqueMin} applies,
for example,
to every
term of the lower central series of $\Mod(\Sigma_g)$.

The mapping class group $\Mod(\Sigma_g)$ is generated by Dehn twists,
while the Torelli group is generated by products of the form
$\tau_\gamma\tau_\delta^{-1}$ where $(\gamma,\delta)$ is a pair of
cobordant simple curves.
Bouilly's approach to the ergodicity of the Torelli group uses
the idea that, for almost every conjugacy class of representation
$[\rho]$ and for every bounding pair $(\gamma,\delta)$, the
product
$\tau_\gamma\tau_\delta^{-1}$ acts as a
totally irrational rotation along a torus immersed in the character
variety $X(\pi_1\Sigma_g,\SU_2)$. Thus, for an appropriate sequence
of powers,
$\tau_\gamma^N\tau_\delta^{-N}$ approximates the effect of the
Dehn twist $\tau_\gamma$. This reduces the ergodicity properties
of the Torelli group to those of the whole mapping class group,
and these are well understood.

The key lemma in the proof of Theorem~\ref{thm:PresqueMin}
is Lemma~\ref{lem:PresqueMin} below. 
It consists in extending Bouilly's trick to the case when $\gamma$ and $\delta$ 
are no longer disjoint. We manage to control the action of $\tau_\gamma^n\tau_\delta^{-n}$ 
for some sequences of integers $n$ dictated by classical theorems in Diophantine approximation theory. 



\section{Proof of Theorem~\ref{thm:PresqueMin}}

We first set up some notation.
\subsection{Notation and reminders}
The space $\mathrm{Hom}(\pi_1\Sigma_g,\SU_2)$ of morphisms from
$\pi_1\Sigma_g$ to $\SU_2$ is naturally endowed with the product topology
and the {\em character variety} $X(\pi_1\Sigma_g,\SU_2)$ is the
quotient of this representation space by the conjugation action
of~$\SU_2$. From now on we will denote it simply by $X$.

The mapping class group
$\Mod(\Sigma_g)=\pi_0(\mathrm{Diff}_+(\Sigma_g))$
is, by the Dehn-Nielsen-Baer theorem, isomorphic to an index two
subgroup of $\mathrm{Out}(\pi_1\Sigma_g)$. It acts naturally on $X$,
by setting, for $\phi\in\mathrm{Aut}(\Sigma_g)$ and $[\rho]\in X$,
$\phi\cdot [\rho] = [\rho\circ\phi^{-1}]$: this descends to an action
of $\mathrm{Out}(\pi_1\Sigma_g)$.

The mapping class group is generated by the Dehn twists:
when $\gamma\subset\Sigma$ is a simple closed curve, we denote by
$\tau_\gamma$ the Dehn twist along $\gamma$;
see~\textsl{e.g.} \cite[Chapter~3]{FarbMargalit} for a definition,
and numerous properties.
Given such a curve,
we may choose a representant in $\pi_1\Sigma_g$:
such a representant is well defined up to conjugacy and up to
passing to the inverse. Yet, we will often use the same notation,
$\gamma$ for the corresponding elements of $\pi_1\Sigma_g$.

For every element $A\in\SU_2$, we will write
$\theta(A)=\frac{1}{\pi}\arccos(\frac{1}{2}\tr(A))\in[0,1]$.
Note that this is also invariant by conjugation and by taking the
inverse. Thus, when $\gamma$ is an unoriented closed curve, or
an element of $\pi_1\Sigma_g$,
we also define 
$\theta_\gamma\colon X\to [0,1]$ by
$\theta_\gamma([\rho])=\theta(\rho(\gamma)).$
This function is continuous, and smooth on $\theta_\gamma^{-1}((0,1))$.


It is well-known that the subspace of irreducible representations in $X$ forms 
a Zariski open subset $X^{\rm irr}$, which is the smooth part of $X$. 
Moreover, there is a $\Mod(\Sigma_g)$-invariant symplectic form on $X^{\rm irr}$ and 
the Hamiltonian flow of $\theta_\gamma$ on $X^{\rm irr}\cap \theta_\gamma^{-1}((0,1))$, 
denoted  by $\Phi_\gamma^t$ is $1$-periodic. This flow can be extended to $\theta_\gamma^{-1}((0,1))$
 and it satisfies the crucial identity $\tau_\gamma([\rho])=\Phi_\gamma^{\theta_\gamma([\rho])}([\rho])$ for 
all $[\rho]\in \theta_\gamma^{-1}((0,1))$. We refer to \cite{Goldman86} for all these facts.

\subsection{Simultaneous Diophantine approximation}
In the following definition, and subsequently in this note,
for all $x\in\R/\Z$ we will denote by $|x|$ its distance to $0$
in~$\R/\Z$.
\begin{definition}\label{def:App}
  A
  pair $(x,y)$ of irrational elements of $\R/\Z$ will be said
  {\em appropriately approximable} if there exists a
  strictly increasing sequence
  $(q_n)$ of integers such that $q_n x$ converges to $0$
  faster than $\frac{1}{q_n}$
  (\textsl{i.e.}, $|q_n x|=o(\frac{1}{q_n})$)
  and $q_n y$ converges to $y$ in~$\R/\Z$.
\end{definition}
A classical theorem of Khinchin~\cite{Khinchin}
states that if $(\psi_n)$ is a decreasing sequence of real
numbers and if $\sum\psi_n$ diverges, then for almost every
$x$ there are infinitely many integers $q$ such that
$|q x|\leqslant\psi_q$.
In particular for example, for almost every $x$, there are
infinitely many integers $q$ satisfying
$|q x|\leqslant\frac{1}{q\ln q}$.

Now, a classical theorem of Hardy and
Littlewood~\cite[Theorem 1.40]{HardyLittlewood} states that
for every strictly increasing sequence of integers $(q_n)$,
for almost every $y\in\R/\Z$ the set $\{q_n y, n\geqslant 0\}$
is dense in $\R/\Z$. In particular, for almost every $y$,
the number $y$ is an accumulation point of the sequence
$(q_n y)$.

These two theorems together imply the following
observation.
\begin{observation}
  The set $\App\subset (\R/\Z)^2$ of appropriately approximable
  pairs has full measure.
\end{observation}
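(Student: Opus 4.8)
The plan is to decouple the two Diophantine conditions by a Fubini argument in the variable $x$: it is enough to find a full-measure set of $x\in\R/\Z$ such that, for each such $x$, almost every $y\in\R/\Z$ makes the pair $(x,y)$ appropriately approximable. The advantage of this reduction is that once $x$ is fixed one can freeze a single witnessing sequence $(q_n)$, depending only on $x$, and then apply the Hardy--Littlewood theorem to that particular sequence.

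Concretely, I would first let $E\subset\R/\Z$ denote the set of irrational numbers $x$ for which there are infinitely many integers $q\geqslant 2$ with $|qx|\leqslant\frac{1}{q\ln q}$; by the consequence of Khinchin's theorem recalled above (the rationals being a null set), $E$ has full measure. For $x\in E$, let $q_n(x)$ be the $n$-th smallest integer $q\geqslant 2$ with $|qx|\leqslant\frac{1}{q\ln q}$. Then $(q_n(x))_n$ is strictly increasing with $q_n(x)\to\infty$, each map $x\mapsto q_n(x)$ is Borel, and
\[
q_n(x)\,|q_n(x)\,x|\;\leqslant\;\frac{1}{\ln q_n(x)}\;\longrightarrow\;0,
\]
so that $|q_n(x)\,x|=o(1/q_n(x))$. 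Consider the Borel set
\[
A=\bigl\{\,(x,y)\in E\times(\R/\Z)\ :\ y\ \text{is an accumulation point of}\ (q_n(x)\,y)_n\,\bigr\}.
\]
If $(x,y)\in A$ and $y$ is irrational, then extracting the subsequence of $(q_n(x))_n$ along which $q_n(x)\,y\to y$ produces a witnessing sequence for the pair $(x,y)$ in the sense of Definition~\ref{def:App}, since the estimate $|q_n(x)\,x|=o(1/q_n(x))$ is inherited by subsequences; hence the intersection of $A$ with the (full-measure) set of pairs of irrationals is contained in $\App$.

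Finally I would evaluate the measure of $A$ by Fubini together with the Hardy--Littlewood theorem: for each fixed $x\in E$ the slice $A_x=\{y:\ y\ \text{is an accumulation point of}\ (q_n(x)\,y)_n\}$ has full measure, because $(q_n(x))_n$ is a fixed strictly increasing sequence of integers; since $E$ itself has full measure, $A$ has full measure, and therefore so does $\App$ (a set containing a full-measure set has null complement). The one point demanding attention is that the witnessing sequence in Definition~\ref{def:App} really does depend on $x$, which is precisely why the pair $(x,y)$ cannot be fed into the Hardy--Littlewood theorem directly; the Borel selection $x\mapsto(q_n(x))_n$ and the use of Fubini are exactly what circumvent this difficulty. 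Everything else is routine assembly of the two quoted Diophantine statements.
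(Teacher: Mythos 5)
Your argument is correct and is exactly the combination of Khinchin's and Hardy--Littlewood's theorems that the paper intends: the authors simply state that ``these two theorems together imply'' the observation, leaving implicit the Fubini step and the Borel selection $x\mapsto (q_n(x))_n$ that you spell out. The extra care you take with measurability of the slices and with passing to the subsequence along which $q_n(x)\,y\to y$ is a faithful (and welcome) elaboration of the same proof, not a different route.
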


We continue with some preliminary observations concerning
mapping class groups and character varieties.
\subsection{Preliminary observations}
In the next statements, we denote by $P$ the set of
pairs $(\gamma,\delta)$ of isotopy classes of non-separating and 
non-isotopic simple curves.
\begin{observation}\label{obs:OnBouge}
  Let $\gamma\subset\Sigma_g$ be an unoriented,
  non-separating simple closed curve.
  Then there exists $\varphi\in\Gamma$ such that
  $(\gamma,\varphi(\gamma))\in P$.
\end{observation}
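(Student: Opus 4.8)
The plan is to produce, for a given non-separating simple closed curve $\gamma$, an element $\varphi$ of the normal subgroup $\Gamma$ whose image curve $\varphi(\gamma)$ is neither separating (automatic, since $\varphi$ is a homeomorphism) nor isotopic to $\gamma$. The first step is to pick \emph{any} non-trivial $f\in\Gamma$: since $\Gamma$ is non central, such an $f$ exists. The key point is that a non-central mapping class cannot fix the isotopy class of every simple closed curve; indeed, by the change-of-coordinates principle together with the Alexander method (see~\cite[Chapter~2]{FarbMargalit}), a mapping class that preserves the isotopy class of every simple closed curve is either the identity or, when $g=2$, the hyperelliptic involution — in either case central. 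Hence there is a simple closed curve $c$ with $f(c)$ not isotopic to $c$.

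The second step transports this to the given curve $\gamma$. Both $\gamma$ and $c$ are non-separating, so by the change-of-coordinates principle there is $h\in\Mod(\Sigma_g)$ with $h(c)=\gamma$ (up to isotopy). Set $\varphi = h f h^{-1}$; this lies in $\Gamma$ because $\Gamma$ is normal. Then $\varphi(\gamma)=h f h^{-1}(\gamma)=h(f(c))$, and since $f(c)$ is not isotopic to $c$ and $h$ is a homeomorphism, $\varphi(\gamma)=h(f(c))$ is not isotopic to $h(c)=\gamma$. Moreover $\varphi(\gamma)$ is non-separating because $\gamma$ is and $\varphi$ preserves the property of being separating. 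Therefore $(\gamma,\varphi(\gamma))\in P$, as desired.

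The only delicate point is the input fact that a non-central mapping class must move some isotopy class of simple closed curve; this is where the hypothesis $g\geqslant 2$ and the exceptional behaviour at $g=2$ (the hyperelliptic involution) enter. One can phrase it as follows: the action of $\Mod(\Sigma_g)$ on the set of isotopy classes of simple closed curves has kernel equal to the centre (trivial for $g\geqslant 3$, the $\Z/2\Z$ generated by the hyperelliptic involution for $g=2$); this is classical and follows, for instance, from the Alexander method. Granting this, the construction above is the entire argument, and I expect no further obstacles.
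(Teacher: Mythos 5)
Your overall strategy --- find an element of $\Gamma$ that moves the isotopy class of some simple closed curve, then conjugate by a change of coordinates to transport that moved curve to $\gamma$ --- is the same as the paper's, and the conjugation step ($\varphi = hfh^{-1}\in\Gamma$ by normality, $\varphi(\gamma)=h(f(c))\not\simeq h(c)=\gamma$) is handled correctly. Two points in your write-up need attention, however. First, you pick ``any non-trivial $f\in\Gamma$'' but then invoke the fact that a \emph{non-central} mapping class moves some curve; for $g=2$ the subgroup $\Gamma$ may well contain the hyperelliptic involution, which is non-trivial yet fixes every isotopy class of simple closed curve. You must choose $f$ non-central, which the hypothesis that $\Gamma$ is non-central permits. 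This is a slip of wording rather than of substance.

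The second point is a genuine gap as written: the curve $c$ you extract from the Alexander method is merely \emph{some} simple closed curve, and nothing in your argument guarantees it is non-separating. Your next sentence asserts ``both $\gamma$ and $c$ are non-separating'' and relies on this to produce $h$ with $h(c)=\gamma$; but separating and non-separating curves lie in different $\Mod(\Sigma_g)$-orbits, so if $c$ happened to be separating the change-of-coordinates step would fail outright. The fix is short and is essentially what the paper does: since $\Mod(\Sigma_g)$ is generated by Dehn twists along \emph{non-separating} curves (Lickorish), a non-central $f$ must fail to commute with some such twist $\tau_\delta$, and the relation $f\tau_\delta f^{-1}=\tau_{f(\delta)}$ then forces $f(\delta)$ to be non-isotopic to $\delta$ with $\delta$ non-separating. (Alternatively, one can run the Alexander method on a filling collection consisting only of non-separating curves.) With either patch your argument is complete and coincides with the paper's proof.
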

\begin{proof}
  Since $\Gamma$ is not central, and since $\Mod(\Sigma_g)$
  is generated by Dehn twists along non-separating curves,
  there exists a non-separating simple closed curve
  $\delta$, and $\psi\in\Gamma$, such that $\psi$ and
  the Dehn twist $\tau_\delta$ do not commute.
  There exists $\phi\in\Mod(\Sigma_g)$ mapping $\delta$ to
  $\gamma$, so $\phi\tau_\delta\phi^{-1}=\tau_\gamma$.
  Now $\varphi=\phi\psi\phi^{-1}$ is in $\Gamma$ since
  $\Gamma$ is normal, and $\varphi$ does not commute with
  $\tau_\gamma$; this implies the statement.
\end{proof}

For every $(\gamma,\delta)\in P$, we denote by
$\Ind(\gamma,\delta)$ the subset of $X$
consisting of those $[\rho]$ such that
$(\theta(\rho(\gamma)), \theta(\rho(\delta)))\in\App$.
As we will see below, this condition gives some independence of
the traces of $\rho(\gamma)^n$ and $\rho(\delta)^n$
for $n$ large.
\begin{observation}\label{obs:IndGene}
  Let $(\gamma,\delta)\in P$. Then $\Ind(\gamma,\delta)$
  has full measure in~$X$.
\end{observation}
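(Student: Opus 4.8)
The plan is to show that $\Theta_*\mu$ is absolutely continuous with respect to Lebesgue measure, where $\Theta=(\theta_\gamma,\theta_\delta)\colon X\to[0,1]^2\subset(\R/\Z)^2$ and $\mu$ is the natural measure on $X$ coming from Goldman's symplectic form; recall $\mu$ is absolutely continuous in the charts of $X^{\irr}$ and gives $X\setminus X^{\irr}$ measure zero. Granting the absolute continuity of $\Theta_*\mu$, the complement $N=(\R/\Z)^2\setminus\App$ is Lebesgue-negligible, since $\App$ has full measure as just observed; and since $X\setminus\Ind(\gamma,\delta)=\Theta^{-1}(N)$ we get $\mu(X\setminus\Ind(\gamma,\delta))=\Theta_*\mu(N)=0$, as desired. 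In turn, to prove that $\Theta_*\mu\ll\mathrm{Leb}$ it is enough to produce an open subset $W\subset X^{\irr}$, with $\mu(X\setminus W)=0$, on which $\Theta$ is a submersion: a submersion pushes an absolutely continuous measure to an absolutely continuous one (local normal form together with Fubini).

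To build $W$, I would discard the following $\mu$-negligible sets. The reducible locus is negligible. The sets $\theta_\gamma^{-1}(\{0,1\})=\{[\rho]:\rho(\gamma)=\pm I\}$ and its analogue for $\delta$ are proper real-algebraic subsets of $X$ — proper exactly because $\gamma$ and $\delta$ are non-separating, so that $\rho(\gamma)$ and $\rho(\delta)$ may be taken to be any element of $\SU_2$ — hence negligible. On $X^{\irr}\cap\theta_\gamma^{-1}((0,1))$ the function $\theta_\gamma$ is the Hamiltonian of the circle action $\Phi_\gamma^t$, so the locus $\{d\theta_\gamma=0\}$, being the fixed-point set of this action, is a union of symplectic submanifolds of codimension $\geqslant 2$; similarly for $\delta$. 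Removing all of these from the connected manifold $X^{\irr}$ deletes only subsets of codimension $\geqslant 2$, so the remaining open set $W$ is connected, has full $\mu$-measure, and on it $\theta_\gamma,\theta_\delta\in(0,1)$ with $d\theta_\gamma,d\theta_\delta$ both non-vanishing. On $W$, the map $\Theta$ fails to be a submersion exactly on $Z=\{d\theta_\gamma\wedge d\theta_\delta=0\}$, a real-analytic subset of the connected real-analytic manifold $W$; thus either $Z=W$ or $Z$ is $\mu$-negligible, and it remains to rule out the first case — that is, to find a single $[\rho]\in W$ at which $d\theta_\gamma$ and $d\theta_\delta$ are linearly independent.

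For this I would use Goldman's Poisson bracket formula. Since $d\theta_\delta=\lambda\,d\theta_\gamma$ at a point forces $\{\theta_\gamma,\theta_\delta\}=\lambda\{\theta_\gamma,\theta_\gamma\}=0$ there, it suffices to find $[\rho]\in W$ with $\{\theta_\gamma,\theta_\delta\}([\rho])\neq 0$. When $i(\gamma,\delta)\geqslant 1$, Goldman's formula writes $\{\theta_\gamma,\theta_\delta\}$, up to factors that are nowhere zero on $W$, as a linear combination of trace functions of the pairwise distinct closed curves obtained by smoothing $\gamma\cup\delta$ at one intersection point; as trace functions of distinct free-homotopy classes are linearly independent, this function is not identically zero on $X^{\irr}$, hence not on the dense open set $W$. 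When $i(\gamma,\delta)=0$, the curves $\gamma,\delta$ being disjoint, non-isotopic, non-separating with $g\geqslant 2$, a routine argument in surface topology (cut along $\delta$; then $\gamma$ becomes an essential, non-peripheral simple closed curve in a surface of genus $\geqslant 1$ with two boundary components) yields a simple closed curve $\epsilon$ with $i(\epsilon,\gamma)=1$ and $i(\epsilon,\delta)=0$; then $\{\theta_\gamma,\theta_\epsilon\}\not\equiv 0$ on $W$ (a single transverse intersection, no cancellation), whereas $\{\theta_\delta,\theta_\epsilon\}\equiv 0$ ($\delta$ and $\epsilon$ disjoint), so $d\theta_\delta=\lambda\,d\theta_\gamma$ throughout $W$, with $\lambda$ nowhere zero there, would force $\{\theta_\gamma,\theta_\epsilon\}\equiv 0$ on $W$ — a contradiction.

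The soft ingredients are the reduction to the absolute continuity of $\Theta_*\mu$, the codimension bookkeeping that makes $W$ connected and co-negligible, and the real-analytic dichotomy for $Z$. The main obstacle is the last step: checking that Goldman's bracket $\{\theta_\gamma,\theta_\delta\}$ (or $\{\theta_\gamma,\theta_\epsilon\}$) is genuinely not identically zero — immediate for a single intersection point, but requiring an argument against cancellation among the trace functions of the distinct resolved curves when there are several — together with the elementary but mildly case-dependent topology producing the auxiliary curve $\epsilon$ when $\gamma$ and $\delta$ are disjoint.
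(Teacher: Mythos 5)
Your overall architecture is the same as the paper's---reduce the observation to showing that $\Theta=(\theta_\gamma,\theta_\delta)$ is a submersion away from a null set, and get this from a single point where $d\theta_\gamma,d\theta_\delta$ are independent via analyticity (the paper uses Zariski-irreducibility of $X$ where you use real-analytic connectedness of $W$; your measure-theoretic and codimension bookkeeping is fine). But your treatment of the disjoint case contains a step that actually fails. If $(\gamma,\delta)$ is a bounding pair---disjoint, non-isotopic, non-separating, with $\gamma\cup\delta$ separating $\Sigma_g$, which occurs for every $g\geqslant 3$---then $[\gamma]=[\delta]$ in $H_1(\Sigma_g;\Z/2)$, so every closed curve $\epsilon$ satisfies $i(\epsilon,\gamma)=i(\epsilon,\delta)\bmod 2$, and no simple closed curve can meet $\gamma$ once and miss $\delta$. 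Equivalently, in your cut-surface picture $\gamma$ becomes a \emph{separating} curve of $\Sigma_{g-1,2}$ and admits no dual curve. So the auxiliary curve $\epsilon$ does not exist precisely for bounding pairs, which is the case closest to the Torelli-group motivation of the paper. The paper argues differently here: for disjoint curves it extends $(\theta_\gamma,\theta_\delta)$ to a full system of action-angle coordinates coming from a pants decomposition \cite{Jeffrey-Weitsman}, which makes $\Theta$ a submersion on all of $U=\Theta^{-1}((0,1)^2)$ with no case analysis.

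The second gap is the one you flag yourself: when $i(\gamma,\delta)\geqslant 2$, Goldman's formula writes $\{\theta_\gamma,\theta_\delta\}$ as a \emph{signed} sum of trace functions of the resolutions at the various intersection points, and the linear-independence argument needs these resolutions to be pairwise distinct up to free homotopy and inversion; this is not automatic even in minimal position, and ruling out cancellation is genuinely the hard content here. It is exactly what the paper outsources to \cite[Corollary 5.2]{Charles-Marche}, which gives the non-vanishing of the bracket for any two essentially intersecting simple curves. As written, your argument establishes the observation only when $\gamma$ and $\delta$ intersect exactly once (where your homological distinctness argument does work, since both curves are non-separating) or are disjoint with $\gamma\cup\delta$ non-separating; both remaining cases need either the cited results or a substitute argument.
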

\begin{proof}
  Consider the map
  $\Theta=(\theta_\gamma,\theta_\delta)
  \colon X\to [0,1]^2$.
  We want to show that $\Theta^{-1}(\App)$ has full measure
  in $X$. If $\Theta$ is a submersion
  at $[\rho]$, the implicit theorem implies that
  $\Theta^{-1}(\App)$ has full measure locally around $[\rho]$.
  Hence it suffices to show that $\Theta$ is a submersion in a
  dense Zariski open subset of $X$. 
  Consider the Zariski open set $U=\Theta^{-1}(0,1)^2$: it is
  well-known that $d\theta_\gamma$ and $d\theta_\delta$ are
  smooth non-vanishing forms on $U$. 
  If $\gamma$ and $\delta$ are disjoint, $\Theta$ can be extended
  to a system of action-angle coordinate, which implies
  that $\Theta$ is a submersion everywhere in~$U$, see for
  instance~\cite{Jeffrey-Weitsman}.
  If $\gamma,\delta$ do intersect, then it is
  known that their Poisson bracket does not vanish identically,
  see for instance~\cite[Corollary 5.2]{Charles-Marche}.
  As $X$ is irreducible, it follows that
  $d\theta_\gamma,d\theta_\delta$ are linearly independent in
  a Zariski-open subset of $U$, proving the lemma.
\end{proof}
From the Observation~\ref{obs:IndGene}, it follows that the
set
\[ \Ind = \bigcap_{(\gamma,\delta)\in P}\Ind(\gamma,\delta)\]
has full measure in~$X$.
It is obviously $\Mod(\Sigma_g)$-invariant,
and
for any $[\rho]\in \Ind$ and
any non-separating simple curve $\gamma$,
we have $\theta_\gamma(\rho)\in(0,1)$; in fact
$\theta_\gamma(\rho)$ is irrational.

\subsection{The proof}
Since the action of $\Mod(\Sigma_g)$ on $X$
is ergodic (by Goldman~\cite{Goldman97}), the set
\[ D = \left\lbrace [\rho]\, ; \Mod(\Sigma_g)\cdot[\rho]
\text{ is dense in }X \right\rbrace \]
has full measure in~$X$.
In fact, this set is known explicitely from the work of
Previte and Xia~\cite{PreviteXia}; it is the set of those $[\rho]$
such that the image of $\rho$ is dense in $\SU_2$.
Thus, the set
$D\cap\Ind$ also has full measure, and Theorem~\ref{thm:PresqueMin}
will follow from the following statement.
\begin{proposition}\label{prop:PresqueMin}
  For all $[\rho]\in D\cap\Ind$, the set $\Gamma\cdot[\rho]$ is
  dense in~$X$.
\end{proposition}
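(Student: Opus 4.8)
The strategy is to show that the $\Gamma$-orbit of a point $[\rho]\in D\cap\Ind$ contains, in its closure, a point whose $\Mod(\Sigma_g)$-orbit is already known to be dense; then, by extending this to all of $X$, conclude. Concretely, I would first record the key lemma (the promised Lemma~\ref{lem:PresqueMin}): for $(\gamma,\delta)\in P$ and $[\rho]$ with $(\theta_\gamma([\rho]),\theta_\delta([\rho]))$ appropriately approximable, one can choose a sequence $q_n\to\infty$ with $|q_n\theta_\gamma([\rho])|=o(1/q_n)$ and $q_n\theta_\delta([\rho])\to\theta_\delta([\rho])$ in $\R/\Z$, and then analyze $\tau_\gamma^{q_n}\tau_\delta^{-q_n}\cdot[\rho]$. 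Using the identity $\tau_\gamma([\sigma])=\Phi_\gamma^{\theta_\gamma([\sigma])}([\sigma])$ and $1$-periodicity of $\Phi_\gamma^t$, one sees that $\tau_\gamma^{q_n}$ acts near $[\rho]$ almost as $\Phi_\gamma^{q_n\theta_\gamma(\cdot)}$; since $q_n\theta_\gamma([\rho])$ is $o(1/q_n)$, after the flow has been iterated the cumulative ``angle defect'' stays controlled and $\tau_\gamma^{q_n}\cdot[\rho]\to[\rho]$, whereas the $\tau_\delta^{-q_n}$ factor, because $q_n\theta_\delta([\rho])\to\theta_\delta([\rho])$, produces in the limit precisely $\Phi_\delta^{-\theta_\delta([\rho])}([\rho])=\tau_\delta^{-1}([\rho])$. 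Hence $\tau_\delta^{-1}([\rho])$ — equivalently $\tau_\delta([\rho])$, after also running the argument with $\tau_\gamma^{-q_n}\tau_\delta^{q_n}$ — lies in $\overline{\langle\tau_\gamma^{q_n}\tau_\delta^{-q_n}\rangle\cdot[\rho]}$.

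Second, I would combine this with Observation~\ref{obs:OnBouge}: given any non-separating simple closed curve $\delta$, choose $\gamma=\varphi(\delta')$ appropriately so that a conjugate of $\tau_\delta$ that differs from $\tau_\delta$ by an element of $\Gamma$ becomes available; more directly, Observation~\ref{obs:OnBouge} gives, for each non-separating $\gamma$, some $\varphi\in\Gamma$ with $(\gamma,\varphi(\gamma))\in P$, and then $\tau_{\varphi(\gamma)}=\varphi\tau_\gamma\varphi^{-1}$, so the element $\tau_\gamma\tau_{\varphi(\gamma)}^{-1}=\tau_\gamma\varphi\tau_\gamma^{-1}\varphi^{-1}$ lies in $\Gamma$ (it is a commutator-type element built from $\Gamma$ being normal). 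Applying the key lemma to the pair $(\gamma,\varphi(\gamma))$ and to powers $\tau_\gamma^{q_n}\tau_{\varphi(\gamma)}^{-q_n}\in\Gamma$, one concludes that $\overline{\Gamma\cdot[\rho]}$ is invariant under $\tau_{\varphi(\gamma)}$, hence (conjugating back by $\varphi^{-1}$, and since $\varphi\in\Gamma$) under $\tau_\gamma$ as well, for \emph{every} non-separating simple closed curve $\gamma$. Since $\Mod(\Sigma_g)$ is generated by Dehn twists along non-separating curves, $\overline{\Gamma\cdot[\rho]}$ is $\Mod(\Sigma_g)$-invariant and closed.

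Third, I would close the argument: $[\rho]\in D$ means $\Mod(\Sigma_g)\cdot[\rho]$ is dense in $X$, so a closed $\Mod(\Sigma_g)$-invariant set containing $[\rho]$ must be all of $X$; therefore $\overline{\Gamma\cdot[\rho]}=X$, which is exactly the assertion of the Proposition. One technical point to handle carefully throughout: the flow $\Phi_\gamma^t$ and the Dehn twist identity are only guaranteed on $\theta_\gamma^{-1}((0,1))$, but since $[\rho]\in\Ind$ we have $\theta_\gamma([\rho])\in(0,1)$ (indeed irrational) for every non-separating $\gamma$, and by continuity the relevant iterates stay in this region for $n$ large, so the analysis is valid; one should also note that the whole $\Gamma$-orbit of $[\rho]$ stays in $\Ind$ since $\Ind$ is $\Mod(\Sigma_g)$-invariant.

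The main obstacle is the first step — making precise the claim that $\tau_\gamma^{q_n}$ approximates the identity near $[\rho]$. The subtlety is that $\tau_\gamma^{q_n}$ is \emph{not} literally $\Phi_\gamma^{q_n\theta_\gamma([\rho])}$: each application of $\tau_\gamma$ re-evaluates $\theta_\gamma$ at the current (slightly moved) point, so one must control how $\theta_\gamma$ varies along the orbit and show the accumulated discrepancy over $q_n$ steps is still $o(1)$. This is where the sharp rate $|q_n\theta_\gamma([\rho])|=o(1/q_n)$ (rather than merely $o(1)$) from the definition of appropriately approximable is essential: it gives a per-step angle of size $o(1/q_n^2)$-ish after accounting for the Lipschitz variation of $\theta_\gamma$, so that $q_n$ steps accumulate only $o(1/q_n)\to 0$. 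Turning this heuristic into a clean estimate — ideally via a Grönwall-type bound comparing the discrete iteration $\tau_\gamma^{q_n}$ to the continuous flow $\Phi_\gamma^{q_n\theta_\gamma([\rho])}$ on the compact set where everything is smooth — is the heart of the proof; everything after it is soft.
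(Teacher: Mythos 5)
Your overall architecture matches the paper's: a key lemma placing $\tau_\delta^{\pm 1}\cdot[\rho]$ in $\overline{\Gamma\cdot[\rho]}$, a bootstrap using normality of $\Gamma$ and the $\Mod(\Sigma_g)$-invariance of $\Ind$ to get the whole $\Mod(\Sigma_g)$-orbit of $[\rho]$ inside $\overline{\Gamma\cdot[\rho]}$, and finally the hypothesis $[\rho]\in D$. Your second and third steps are sound, and in fact slightly cleaner than the paper's induction-plus-diagonal argument: since the lemma applies at every point of $\Gamma\cdot[\rho]\subset\Ind$ and the twists are continuous, $\overline{\Gamma\cdot[\rho]}$ is indeed invariant under all $\tau_\gamma^{\pm 1}$.

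The gap is in the first step, which you leave open, and your diagnosis of the difficulty there is mistaken. There is no ``cumulative angle defect'' in iterating a single twist: $\theta_\gamma$ generates the Hamiltonian flow $\Phi_\gamma^t$ and is therefore constant along its own flow lines, so $\tau_\gamma^n([\sigma])=\Phi_\gamma^{n\theta_\gamma([\sigma])}([\sigma])$ holds \emph{exactly} for every $n$ and every $[\sigma]$ with $\theta_\gamma([\sigma])\in(0,1)$; no Gr\"onwall-type comparison is needed. The genuine issue --- which your sketch bypasses by treating the two factors as acting independently at $[\rho]$ --- is that in the composition one twist is applied to a point already displaced by the other, and since $\gamma$ and $\delta$ intersect, $\theta_\gamma$ is not preserved by $\Phi_\delta$. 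With the standard convention that the right factor acts first, $\tau_\gamma^{q_n}\tau_\delta^{-q_n}[\rho]=\Phi_\gamma^{q_n f(q_n\theta_\delta([\rho]))}\Phi_\delta^{-q_n\theta_\delta([\rho])}([\rho])$ where $f(t)=\theta_\gamma(\Phi_\delta^{-t}[\rho])$; the rate $o(1/q_n)$ must therefore be imposed on the \emph{inner} angle $\theta_\delta([\rho])$, so that $f(q_n\theta_\delta([\rho]))=\theta_\gamma([\rho])+o(1/q_n)$ and the outer exponent $q_nf(\cdot)$ converges. You attach the fast rate to $\gamma$ (the outer factor) and the slow condition to $\delta$ (the inner one): as written, $\theta_\gamma(\tau_\delta^{-q_n}[\rho])$ then differs from $\theta_\gamma([\rho])$ by an amount bounded away from zero, and multiplying by $q_n$ destroys the limit. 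Your version can be repaired by using $\tau_\delta^{-q_n}\tau_\gamma^{q_n}\in\Gamma$ instead, or by swapping which coordinate of $\App$ carries the fast approximation (as the paper does); but as it stands the proposal neither states nor proves a correct form of the key lemma, and it locates the essential estimate in the wrong place.
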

The proof resides on the following lemma.
\begin{lemma}\label{lem:PresqueMin}
  Let $\gamma$ be a non-separating simple closed curve and
  $[\rho]\in\Ind$. Then $\tau_\gamma\cdot[\rho]$ is in the
  closure of $\Gamma\cdot[\rho]$.
\end{lemma}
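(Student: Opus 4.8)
The idea is to extend Bouilly's trick from bounding pairs to an arbitrary pair $(\gamma,\delta)\in P$, using the appropriate approximability guaranteed by $[\rho]\in\Ind$. First I would invoke Observation~\ref{obs:OnBouge} to produce $\varphi\in\Gamma$ with $(\gamma,\varphi(\gamma))\in P$; write $\delta=\varphi(\gamma)$. Then for every integer $n$, the element $\tau_\gamma^n\varphi\tau_\gamma^{-n}\varphi^{-1}=\tau_\gamma^n\tau_\delta^{-n}$ lies in $\Gamma$ (using normality), since $\varphi\tau_\gamma^{-n}\varphi^{-1}=\tau_{\varphi(\gamma)}^{-n}=\tau_\delta^{-n}$. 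So it suffices to show that, along a suitable subsequence $n=q_m\to\infty$, the maps $\tau_\gamma^{q_m}\tau_\delta^{-q_m}$ converge pointwise at $[\rho]$ to $\tau_\gamma\cdot[\rho]$, or at least that $\tau_\gamma\cdot[\rho]$ is an accumulation point of $\{\tau_\gamma^{q_m}\tau_\delta^{-q_m}\cdot[\rho]\}_m$.

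For this I would use the Hamiltonian description recalled in the Notation subsection. Since $[\rho]\in\Ind$, both $x:=\theta_\gamma([\rho])$ and $y:=\theta_\delta([\rho])$ lie in $(0,1)$ and the pair $(x,y)$ is appropriately approximable, so there is a strictly increasing sequence $(q_m)$ with $|q_m x|=o(1/q_m)$ and $q_m y\to y$ in $\R/\Z$. Using $\tau_\gamma=\Phi_\gamma^{\theta_\gamma}$ and the $1$-periodicity of the flow $\Phi_\gamma^t$, one has $\tau_\gamma^{q_m}=\Phi_\gamma^{q_m\theta_\gamma}$ on the relevant locus, and similarly for $\delta$. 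The key point is that $\theta_\delta$ is an invariant of the flow $\Phi_\gamma^t$ away from the coincidence locus? — no, that is false in general when $\gamma,\delta$ intersect; instead I would argue differently. Let me reconsider: the plan is to track the point $[\rho_m]:=\tau_\delta^{-q_m}\cdot[\rho]$ and then apply $\tau_\gamma^{q_m}$. Since $\tau_\delta^{q_m}=\Phi_\delta^{q_m\theta_\delta}$ and $q_m\theta_\delta([\rho])\to y=\theta_\delta([\rho])$ in $\R/\Z$, while the flow $\Phi_\delta^t$ is $1$-periodic and continuous, we get $\tau_\delta^{-q_m}\cdot[\rho]=\Phi_\delta^{-q_m\theta_\delta([\rho])}([\rho])\to\Phi_\delta^{-\theta_\delta([\rho])}([\rho])=\tau_\delta^{-1}\cdot[\rho]$. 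Hmm, but we want $\tau_\gamma\cdot[\rho]$, not $\tau_\gamma\tau_\delta^{-1}\cdot[\rho]$. The resolution, which is the heart of the argument, is that we should instead exploit $|q_m x|=o(1/q_m)$ to make $\tau_\gamma^{q_m}$ act almost trivially in a $C^1$ sense and correct by a single $\tau_\gamma$; I would set up the telescoping $\tau_\gamma^{q_m}\tau_\delta^{-q_m}$ and compare it with $\tau_\gamma$ by estimating, along the flow lines, that the cumulative angle $\theta_\gamma$ accrued is $q_m\theta_\gamma([\rho_m])$ where $[\rho_m]\to\tau_\delta^{-1}[\rho]$ — so one really needs $q_m\theta_\gamma$ to converge, evaluated along the moving points, to the right value modulo $\Z$; the condition $|q_m x|=o(1/q_m)$ gives room to absorb the $O(q_m\cdot\|[\rho_m]-[\rho]\|)$ error provided $\|[\rho_m]-[\rho]\|=O(1/q_m^2)$ or so, which is why the $o(1/q_m)$ (rather than merely $o(1)$) strength is essential.

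The main obstacle, then, is precisely this quantitative $C^1$-control: showing that $\tau_\gamma^{q_m}\tau_\delta^{-q_m}\cdot[\rho]$, which a priori equals $\Phi_\gamma^{q_m\theta_\gamma(\,\cdot\,)}$ applied to a point near $\tau_\delta^{-1}[\rho]$, can be steered to $\tau_\gamma\cdot[\rho]$. I would handle it by choosing the subsequence more carefully: by Khinchin's theorem combined with Hardy–Littlewood (the two inputs behind the Observation that $\App$ has full measure), I can demand simultaneously that $q_m x$ tends to $0$ like $o(1/q_m)$ \emph{and} that $q_m y$ tends to any prescribed target — in particular I will arrange $q_m\theta_\delta([\rho])\to 2\theta_\delta([\rho])$ in $\R/\Z$, so that $\tau_\delta^{-q_m}\cdot[\rho]\to \tau_\delta^{-2}\cdot[\rho]$, and iterate/adjust so the net effect telescopes to a pure power of $\tau_\gamma$; but cleaner is to prescribe $q_m y\to 0$ in $\R/\Z$, whence $\tau_\delta^{-q_m}\cdot[\rho]\to[\rho]$, and then $\tau_\gamma^{q_m}$ applied to points converging to $[\rho]$ with $q_m\theta_\gamma$ converging (mod $\Z$) to a chosen value — choosing that value to be $\theta_\gamma([\rho])$ forces the limit to be $\Phi_\gamma^{\theta_\gamma([\rho])}([\rho])=\tau_\gamma\cdot[\rho]$, as desired. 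Making the interchange of limits rigorous — i.e.\ that $\Phi_\gamma^{q_m\theta_\gamma([\rho_m])}([\rho_m])\to\Phi_\gamma^{\theta_\gamma([\rho])}([\rho])$ when $[\rho_m]\to[\rho]$ and $q_m\theta_\gamma([\rho_m])\to\theta_\gamma([\rho])$ in $\R/\Z$ — uses continuity of $(t,p)\mapsto\Phi_\gamma^t(p)$ together with the decay $|q_m\theta_\gamma([\rho])|=o(1/q_m)$ to dominate the discrepancy $q_m(\theta_\gamma([\rho_m])-\theta_\gamma([\rho]))=O(q_m\cdot|[\rho_m]-[\rho]|)$, which is where I must also feed in that $|[\rho_m]-[\rho]|=o(1/q_m)$; arranging this last estimate is the genuinely delicate bookkeeping step, and I expect it to consume most of the work in the actual proof.
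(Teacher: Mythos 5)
Your overall strategy is the paper's: use Observation~\ref{obs:OnBouge} and normality to place $\tau_\gamma^n\tau_\delta^{-n}=\tau_\gamma^n\varphi\tau_\gamma^{-n}\varphi^{-1}$ in $\Gamma$, then drive $\tau_\gamma^{q_m}\tau_\delta^{-q_m}\cdot[\rho]$ to $\tau_\gamma\cdot[\rho]$ along a subsequence supplied by the Diophantine condition. But there is a concrete error in how you distribute the two clauses of Definition~\ref{def:App} between the two curves, and it is never cleanly resolved. You assign the fast decay $|q_m x|=o(1/q_m)$ to $x=\theta_\gamma([\rho])$ and the recurrence $q_m y\to y$ to $y=\theta_\delta([\rho])$. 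With that assignment the argument fails, as you yourself notice: $\tau_\delta^{-q_m}\cdot[\rho]=\Phi_\delta^{-q_m\theta_\delta([\rho])}([\rho])$ converges to $\tau_\delta^{-1}\cdot[\rho]$, a point at definite distance from $[\rho]$, and you then have no control on $q_m\theta_\gamma$ evaluated \emph{there}. The correct assignment is the opposite one: you need $|q_m\theta_\delta([\rho])|=o(1/q_m)$, so that $[\rho_m]:=\tau_\delta^{-q_m}\cdot[\rho]$ returns to $[\rho]$ within $o(1/q_m)$, and $q_m\theta_\gamma([\rho])\to\theta_\gamma([\rho])$ in $\R/\Z$, so that $\tau_\gamma^{q_m}=\Phi_\gamma^{q_m\theta_\gamma(\cdot)}$ acts on these points asymptotically like $\Phi_\gamma^{\theta_\gamma([\rho])}=\tau_\gamma$. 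This is exactly the statement $(\theta_\delta([\rho]),\theta_\gamma([\rho]))\in\App$, i.e.\ $[\rho]\in\Ind(\delta,\gamma)$, which is available because $\Ind$ is the intersection over all \emph{ordered} pairs in $P$. You do not need to re-run Khinchin and Hardy--Littlewood with a new target, as you propose in your ``resolution''; the asymmetric condition you want is already encoded in $\App$, applied to the right ordered pair.

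Once the roles are fixed, the step you flag as ``genuinely delicate bookkeeping'' is a one-line Taylor expansion, not an open issue: since $\theta_\gamma([\rho])\in(0,1)$, the function $f(t)=\theta_\gamma(\Phi_\delta^{-t}([\rho]))$ is smooth at $0$, so $f(q_m\theta_\delta([\rho]))=\theta_\gamma([\rho])+O(|q_m\theta_\delta([\rho])|)=\theta_\gamma([\rho])+o(1/q_m)$, hence $q_m f(q_m\theta_\delta([\rho]))=q_m\theta_\gamma([\rho])+o(1)\to\theta_\gamma([\rho])$ in $\R/\Z$, and continuity of $(t,s)\mapsto\Phi_\gamma^{s}\Phi_\delta^{-t}([\rho])$ finishes the proof. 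Finally, your last sentence asserts the decay $|q_m\theta_\gamma([\rho])|=o(1/q_m)$ while simultaneously requiring $q_m\theta_\gamma([\rho])\to\theta_\gamma([\rho])\neq 0$; these are incompatible (the angle is irrational), and this internal contradiction is the symptom of the unresolved role swap described above.
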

\begin{proof}
  Consider an element $\varphi\in \Gamma$ as in
  Observation~\ref{obs:OnBouge} and set $\delta=\varphi(\gamma)$.
  We observe that for any $n\in \mathbb{N}$,
  $\tau_\gamma^{n}\tau_\delta^{-n}=
  \tau_\gamma^n \varphi \tau_\gamma^{-n}\varphi^{-1}$
  belongs to $\Gamma$.
  
  Write $\alpha=\theta_\delta(\rho)$
  and $\beta=\theta_\gamma(\rho)$.
  As $\alpha\in(0,1)$,
  the twist flow  $(\Phi_\delta^s)_{s\in\R/\Z}$ is well defined on
  $\Phi_\gamma^t([\rho])$ for all $t$ in a
  neighborhood $I$ of $0$ in $\R/\Z$.
  We set $F(t,s)=\Phi_\gamma^{s} \Phi_\delta^{-t}([\rho])$ for
  $(t,s)\in I\times \R/\Z$. From the identity $\tau_\gamma=
  \Phi_\gamma^{\theta_\gamma}$, we get for all $n$ such that $n\alpha\in I$
  \[\tau_\gamma^{n}\tau_\delta^{-n}[\rho]=F(n \alpha,nf(n\alpha))\]
  where $f(t)=\theta_\gamma(\Phi_\delta^{-t}([\rho]))$.
  As $\beta\in(0,1)$, the function $\theta_\gamma$ is smooth
  at $[\rho]$ hence $f$ is smooth at $0$.
  To prove the lemma, it is sufficient to show that one has
  $(n\alpha, nf(n\alpha))\to (0, \beta)$
  for a sequence of $n$'s going to infinity.
  
  Since $(\alpha,\beta)\in\App$, there exists a sequence
  $(q_n)$ of integers as in Definition~\ref{def:App}.
  We have $q_n\alpha\to 0$, so we
  consider the Taylor expansion of $f$ at $0$: since
  $|q_n\alpha| = o(\frac{1}{q_n})$, this gives
  \[ f(q_n\alpha) = f(0) + o(\frac{1}{q_n}), \]
  so $q_nf(q_n\alpha) = q_n\beta + o(1)$. Now, $q_n\beta$ tends to
  $\beta$, by Definition~\ref{def:App}.
\end{proof}

We are ready to conclude the proof of Theorem~\ref{thm:PresqueMin}.
\begin{proof}[Proof of Proposition~\ref{prop:PresqueMin}]
  Recall that $D\cap\Ind$ is $\Mod(\Sigma_g)$-invariant.
  Let $[\rho]\in D\cap\Ind$ and let $\gamma_1,\gamma_2$ be two
  non-separating simple closed curves. By Lemma~\ref{lem:PresqueMin},
  there exists a sequence $(\varphi_n)$ of elements of $\Gamma$
  such that $\varphi_n\cdot[\rho]\to\tau_{\gamma_2}\cdot[\rho]$.
  For all $n$, we may apply Lemma~\ref{lem:PresqueMin} to
  $\varphi_n\cdot[\rho]$,
  and now we can apply a diagonal argument to show that
  $\tau_{\gamma_1}\tau_{\gamma_2}\cdot[\rho]$ is in the closure
  of~$\Gamma\cdot[\rho]$.
  %
  We
  proceed by induction:
  for all $[\rho]\in D\cap\Ind$,
  and all curves $\gamma_1,\ldots,\gamma_n$, the representation
  $\tau_{\gamma_1}\cdots\tau_{\gamma_n}\cdot[\rho]$ is in the
  closure of~$\Gamma\cdot[\rho]$.
  
  We notice\footnote{
  alternatively, we may use the beautiful fact that
  $\Mod(\Sigma_g)$ is {\em positively} generated by
  Dehn twists,
  see~\cite[Paragraph~5.1.4]{FarbMargalit}.}
  that Lemma~\ref{lem:PresqueMin} works equally well
  for $\tau_\gamma^{-1}$ instead of $\tau_\gamma$, hence we
  deduce that the whole orbit
  $\Mod(\Sigma_g)\cdot[\rho]$ (and hence,
  also its closure)
  is contained in the closure of $\Gamma\cdot[\rho]$. As
  $[\rho]\in D$, this implies that $\Gamma\cdot[\rho]$ is dense
  in~$X$.
\end{proof}

\bibliographystyle{plain}

\bibliography{Biblio}

\end{document}